\DeclareMathOperator{\spn}{span}
\newtheorem{thm}{Theorem}[section]
\newtheorem{prop}[thm]{Proposition}
\newtheorem{lem}[thm]{Lemma}
 \newtheorem{rem}[thm]{Remark}
\newtheorem{defn}[thm]{Definition}
\DeclareMathOperator{\supp}{supp}
\DeclareMathOperator{\ext}{ext}
\newcommand{\C}{\mathbb{C}}
\newcommand{\R}{\mathbb{R}}
\newcommand{\la}{\lambda}
\long\def\comment#1{{}}
\def\ph{\phantom{-}}
\def\p{\ph}
 \title{Indeterminate Jacobi operators II}
 \author{Christian Berg and Ryszard Szwarc}
\begin{document}

 \maketitle
file: besz10.tex

\begin{abstract}  We consider the Jacobi operator $(T,D(T))$ associated with an indeterminate Hamburger moment problem, and present countable subsets $S$ of the domain $D(T)$  such that $\spn(S)$ is dense in $\ell^2$. As an example  we have $S=\{(p_n(u))+B(u)(p_n(0)) \mid D(u)=0\}$, where $(p_n)$ denotes the orthonormal polynomials of the moment problem and $B, D$ are two of the Nevanlinna functions. 
It is also proved that sets like $S$ are optimal in the sense that if one vector is removed, then the span is no longer dense.
\end{abstract}

{\bf Mathematics Subject Classification}: Primary 47B25, 47B36, 44A60

{\bf Keywords}. Jacobi matrices and operators, indeterminate moment problems.

\section{Introduction and main results} This paper is a continuation of \cite{B:S5}  by improving certain results about the domain of the indeterminate Jacobi operator. We  consider the Jacobi matrix $J$ associated with a moment sequence $s=(s_n)_{n\ge 0}$ of the form
\begin{equation}\label{eq:mom}
s_n=\int x^n\,d\mu(x),\quad n=0,1,\ldots,
\end{equation}
where $\mu$ is a positive measure on $\R$  with infinite support and moments of every order. It is a tridiagonal matrix of the  form
\begin{equation}\label{eq:Jac}
J=\begin{pmatrix}
b_0 & a_0 & 0 & \hdots\\
a_0 & b_1 & a_1 & \hdots\\
0 & a_1 & b_2 & \hdots\\
\vdots &\vdots & \vdots & \ddots
\end{pmatrix},
\end{equation}
where $a_n>0, b_n\in\R, n\ge 0$  are given by the three term recurrence relation
\begin{equation*}\label{eq:3term}
xp_n(x)=a_np_{n+1}(x)+b_np_n(x)+a_{n-1}p_{n-1}(x), n\ge 0, \quad a_{-1}:=0.
\end{equation*}
Here $(p_n)_{n\ge 0}$ is the sequence of orthonormal polynomials associated with $\mu$, hence satisfying
\begin{equation*}\label{eq:op}
\int p_n(x)p_m(x)\,d\mu(x)=\delta_{n,m}.
\end{equation*}
As usual $p_n$ is a real polynomial of degree $n$ with positive leading coefficient. As in \cite{B:S5} we follow the terminology of \cite{Sch}. Basic results about the classical moment problem can also be found in \cite{Ak} and \cite{S}. Recent  results about indeterminate moment problems can be found in \cite{B:S1}, \cite{B:S2} \cite{B:S3}, \cite{B:S4}, \cite{B:S5}.

 It is easy to see that the proportional measures $\la\mu, \la>0$ lead to the same Jacobi matrix $J$, and the well-known Theorem of Favard (see \cite[Theorem 5.14]{Sch}) states that any matrix of the form \eqref{eq:Jac} with $a_n>0, b_n\in\R$ comes from a unique  moment sequence  $(s_n)$ as above, normalized such that $s_0=1$. In the following we shall always assume that this normalization holds, and consequently the solutions $\mu$ of \eqref{eq:mom} are probability measures  and $p_0=1$.

The Jacobi matrix acts as a symmetric operator  in the Hilbert space $\ell^2$ of square summable complex sequences. Its domain $\mathcal F$ consists of the complex sequences $(c_n)_{n\ge 0}$ with only finitely many non-zero terms, and the action is multiplication of the matrix $J$ by $c\in\mathcal F$ considered as a column, i.e.,
\begin{equation}\label{eq:J} 
(Jc)_n:=a_{n-1}c_{n-1}+b_n c_n+a_n c_{n+1},\quad n\ge 0.
\end{equation}
Denoting $(e_n)_{n\ge 0}$ the standard orthonormal basis of $\ell^2$, we have
\begin{equation*}\label{eq:F}
\mathcal F=\spn \{e_n|n\ge 0\}.
\end{equation*}

\begin{defn} The Jacobi operator associated with $J$ is by definition the closure $(T,D(T))$ of the symmetric operator $(J,\mathcal F)$.
\end{defn}

It is a classical fact that the closed symmetric operator $(T,D(T))$ has deficiency indices  either $(0,0)$  or $(1,1)$. These cases occur precisely if the  moment sequence \eqref{eq:mom} is {\it determinate} or {\it indeterminate}, i.e., there is exactly one or several solutions $\mu$ satisfying \eqref{eq:mom}.   

 By definition $D(T)$ consists of those $c\in\ell^2$ for which there exists a sequence
 $(c^{(k)})\in\mathcal F$ such that $\lim_{k\to\infty}c^{(k)}=c$ and $(Jc^{(k)})$ is a convergent sequence in $\ell^2$. For  such $c$ we have $Tc=\lim_{k\to\infty} Jc^{(k)}$, and this limit is independent of the choice of approximating sequence $(c^{(k)})$. 
 
 Clearly, $D(T)$ is closed under complex conjugation and 
 $$
 T\overline{c}=\overline{Tc},\quad c\in D(T).
 $$
 
 We recall that the adjoint operator $(T^*,D(T^*))$ is the maximal operator associated with $J$, cf. \cite[Proposition 6.5]{Sch}. In fact, the matrix product of $J$  and any column vector $c$ makes sense, cf. \eqref{eq:J}, and $D(T^*)$ consists of those $c\in\ell^2$ for which the product $Jc$ belongs to $\ell^2$. For $c\in D(T^*)$ we have $T^*c=Jc$.
 
In this paper we will only consider the  indeterminate case of the Jacobi operator $(T,D(T))$, where it is known that  the set of solutions $\mu$ to \eqref{eq:mom} is an infinite convex set $V$. The polynomials of the second kind $(q_n)$ are given as
\begin{equation*}
q_n(z)=\int \frac{p_n(z)-p_n(x)}{z-x}\,d\mu(x), \quad z\in \C,
\end{equation*}
where $\mu\in V$ is arbitrary.

We define  the sequences
\begin{equation}\label{eq:frak}
 \mathfrak{p}_z:=(p_n(z)), \mathfrak{q}_z:=(q_n(z)),\quad z\in\C,
 \end{equation}
 where we have followed the terminology of \cite{Sch}. It is known that they belong to
$\ell^2$ because of indeterminacy, and
   $||\mathfrak{p}_z||$ and $||\mathfrak{q}_z||$ are positive continuous functions for $z\in\C$.  It is therefore possible 
for $c\in\ell^2$ to define entire functions $F_c, G_c$ as
\begin{equation}\label{eq:FGc}
F_c(z)=\sum_{n=0}^\infty c_np_n(z),\quad G_c(z)=\sum_{n=0}^\infty c_nq_n(z),\quad z\in\C.
\end{equation}

It is well known that $F_c\in L^2(\mu)$ for any solution $\mu\in V$ and that
$$
\lim_{n\to\infty} \sum_{k=0}^n c_kp_k(z)= F_c(z)
$$
locally uniformly in $z\in\C$ and in $L^2(\mu)$ for any $\mu\in V$. Furthermore, Parseval's equation holds
\begin{equation}\label{eq:Pars}
\int |F_c(x)|^2\,d\mu(x)=||c||^2,\quad c\in\ell^2, \mu\in V.
\end{equation}

We recall the following four entire functions of two complex variables, called the {\it Nevanlinna functions} of the indeterminate moment problem:
\begin{eqnarray}
A(u,v)&=&(u-v)\sum_{k=0}^\infty q_k(u)q_k(v)\label{eq:A}\\
B(u,v)&=&-1+(u-v)\sum_{k=0}^\infty p_k(u)q_k(v) \label{eq:B}\\
C(u,v)&=&1+(u-v)\sum_{k=0}^\infty q_k(u)p_k(v)\label{eq:C}\\
D(u,v)&=&(u-v)\sum_{k=0}^\infty p_k(u)p_k(v)\label{eq:D},
\end{eqnarray}
see Section 7.1 in \cite{Sch}. They satisfy the  fundamental determinant equation
\begin{equation}\label{eq:2vardet}
A(u,v)D(u,v)-B(u,v)C(u,v)=1, \quad u,v\in\C.
\end{equation}
 
We define entire functions of one variable by setting the second variable to 0, i.e., 
\begin{equation}\label{eq:A-D}
A(u)=A(u,0),\;B(u)=B(u,0),\;C(u)=C(u,0),\;D(u)=D(u,0),
\end{equation}
and \eqref{eq:2vardet} becomes

\begin{equation}\label{eq:det}
A(u)D(u)-B(u)C(u)=1,\quad u\in\C.
\end{equation}

By Section 6.5 in \cite{Sch} we have
\begin{equation}\label{eq:p,q}
\mathfrak{p}_z, \mathfrak{q}_z \in D(T^*),\quad T^*\mathfrak{p}_z=z\mathfrak{p}_z, 
T^*\mathfrak{q}_z=e_0+z\mathfrak{q}_z,\quad z\in\C.
\end{equation}

A main result of \cite{B:S5} states the following:

\begin{thm}\label{thm:pqmain} For all $z\in\C$ we have $\mathfrak{p}_z, \mathfrak{q}_z\notin D(T)$.

Let $u,v\in\C$ be given.
\begin{enumerate}
\item[(i)] There exists $\alpha\in\C$ such that $\mathfrak{p}_u+\alpha \mathfrak{p}_v\in D(T)$ if and only if $D(u,v)=0$. In the affirmative case $\alpha$  is uniquely determined as
$\alpha=B(u,v)$. 

\item[(ii)] There exists $\beta\in\C$ such that $\mathfrak{q}_u+\beta \mathfrak{q}_v\in D(T)$ if and only if $A(u,v)=0$. In the affirmative case $\beta$  is uniquely determined as $\beta=-C(u,v)$.

\item[(iii)] There exists $\gamma\in\C$ such that $\mathfrak{p}_u+\gamma \mathfrak{q}_v\in D(T)$ if and only if $B(u,v)=0$. In the affirmative case $\gamma$  is uniquely determined as $\gamma=-D(u,v)$. In particular $\mathfrak{p}_u+\gamma\mathfrak{q}_u\notin D(T)$ for all $u, \gamma\in\C$.
\end{enumerate}
\end{thm}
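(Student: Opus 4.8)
The plan is to reduce everything to the known characterization of when a combination $a\mathfrak p_z + b\mathfrak q_z$ lies in $D(T)$, for which the natural tool is the pairing of $D(T^*)$ with itself via the sesquilinear boundary form. Recall that since $(T,D(T))$ has deficiency indices $(1,1)$, the quotient $D(T^*)/D(T)$ is two-dimensional, and $c\in D(T)$ if and only if the boundary form $[c,d]:=\langle T^*c,d\rangle-\langle c,T^*d\rangle$ vanishes for all $d\in D(T^*)$; equivalently it suffices to test against two fixed vectors spanning a complement of $D(T)$, e.g. $\mathfrak p_w$ and $\mathfrak q_w$ for a fixed $w$, or more symmetrically against $\mathfrak p_0$ and $\mathfrak q_0$. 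Using \eqref{eq:p,q} one computes $[\mathfrak p_u,\mathfrak p_v]$, $[\mathfrak q_u,\mathfrak q_v]$, $[\mathfrak p_u,\mathfrak q_v]$ in closed form: these are, up to normalization, precisely the Nevanlinna functions evaluated at $(u,\overline v)$ or $(u,v)$ — indeed the series \eqref{eq:A}--\eqref{eq:D} are built exactly from such Wronskian-type pairings, and the determinant relation \eqref{eq:2vardet} is the Plücker/cocycle identity for the boundary form. This is the computational heart and I expect it to be the main obstacle: one must be careful about which variable is conjugated, and one must verify that the relevant infinite sums telescope to the stated two-variable functions (this is essentially the Christoffel--Darboux-type manipulation behind \cite[Section 7.1]{Sch}).

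Granting these formulas, part (i): the vector $c=\mathfrak p_u+\alpha\mathfrak p_v$ satisfies $T^*c = u\mathfrak p_u + \alpha v\mathfrak p_v$, so $c\in D(T)$ iff $[c,\mathfrak p_0]=0$ and $[c,\mathfrak q_0]=0$. Expanding bilinearly, each of the two conditions becomes a linear equation in $\alpha$ whose coefficients are values of $D(\cdot,\cdot)$ and $B(\cdot,\cdot)$ (for the test vector $\mathfrak p_0$) respectively $C(\cdot,\cdot)$ and $A(\cdot,\cdot)$ (for $\mathfrak q_0$). Using the specializations \eqref{eq:A-D} at second argument $0$ and the determinant identity \eqref{eq:det} to eliminate, one finds the two equations are consistent (and then have the unique solution $\alpha=B(u,v)$) precisely when $D(u,v)=0$; if $D(u,v)\neq 0$ they are inconsistent, so no $\alpha$ works. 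The fact that $\mathfrak p_z\notin D(T)$ for every $z$ is the degenerate case $\alpha=0$, equivalently it follows because $D(u,u)=0$ would be forced but one checks $B(u,u)=-1\neq 0$ obstructs, or more directly: $\mathfrak p_z\in D(T)$ together with $T^*\mathfrak p_z=z\mathfrak p_z$ would make $z$ an eigenvalue of the selfadjoint... no — rather, $\mathfrak p_z\in D(T)\subset D(T^*)$ with $[\mathfrak p_z,\mathfrak p_z]=0$ automatically, so one instead uses $[\mathfrak p_z,\mathfrak q_z]=C(z,z)=1\neq 0$ to conclude $\mathfrak p_z\notin D(T)$.

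Parts (ii) and (iii) are formally identical, swapping the roles of $p$ and $q$ and hence of $(A,D)$ and $(B,C)$: for (ii) use $T^*\mathfrak q_z = e_0 + z\mathfrak q_z$, note that the extra $e_0=\mathfrak q$-independent term contributes a piece proportional to $\langle e_0,\cdot\rangle$ which must be tracked but cancels between the $u$ and $v$ contributions when the coefficient condition $A(u,v)=0$ holds, yielding $\beta=-C(u,v)$; for (iii), the mixed pairing $[\mathfrak p_u,\mathfrak q_v]$ and its transpose give equations in $\gamma$ with coefficients $B(u,v)$ and $D(u,v)$, consistent iff $B(u,v)=0$ with then $\gamma=-D(u,v)$. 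The final assertion $\mathfrak p_u+\gamma\mathfrak q_u\notin D(T)$ is immediate from (iii) since $B(u,u)=-1\neq 0$ by \eqref{eq:B}. Throughout, uniqueness of $\alpha,\beta,\gamma$ follows because the two test conditions cannot \emph{both} be satisfied by a one-parameter family unless $\mathfrak p_v$ (resp. $\mathfrak q_v$) itself lies in $D(T)$, which parts already exclude; equivalently the linear system in the unknown coefficient has nonzero coefficient matrix.
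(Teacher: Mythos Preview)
The paper does not actually prove this theorem: it is quoted verbatim from \cite{B:S5} (introduced by ``A main result of \cite{B:S5} states the following''), so there is no in-paper proof to compare against. That said, your boundary-form strategy is the standard one and is essentially correct, and it is very likely the approach used in \cite{B:S5} as well: test membership in $D(T)$ by pairing against $\mathfrak p_0$ and $\mathfrak q_0$ (which have real entries, so conjugation issues disappear), obtain the linear system $D(u)+\alpha D(v)=0$, $B(u)+\alpha B(v)=0$ for part (i), and identify the compatibility condition $B(u)D(v)-D(u)B(v)=0$ with $D(u,v)=0$ via the cocycle relations for the Nevanlinna matrix; the determinant identity \eqref{eq:det} then shows $B(z),D(z)$ (resp.\ $A(z),C(z)$) cannot vanish simultaneously, giving $\mathfrak p_z,\mathfrak q_z\notin D(T)$.

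One small correction: your aside ``$[\mathfrak p_z,\mathfrak q_z]=C(z,z)=1$'' is not quite right for complex $z$, because the boundary form is sesquilinear and the second argument gets conjugated, yielding $B(z,\bar z)$ rather than $C(z,z)$. The clean way to handle $\mathfrak p_z\notin D(T)$ is exactly what the rest of your argument already does: use the real test vectors $\mathfrak p_0,\mathfrak q_0$ and invoke \eqref{eq:det} to see $D(z)$ and $B(z)$ cannot both vanish. With that fix your outline is complete.
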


From  this theorem we have the following concrete subspaces of $D(T)$:
\begin{eqnarray*}
P&=&\spn\{\mathfrak{p}_u+B(u,v){\mathfrak{p}}_v \mid u,v\in\C, D(u,v)=0, u\neq v\},\\
Q&=&\spn\{\mathfrak{q}_u-C(u,v){\mathfrak{q}}_v \mid u,v\in\C, A(u,v)=0, u\neq v\},\\
M&=&\spn\{\mathfrak{p}_u-D(u,v){\mathfrak{q}}_v \mid u,v\in\C, B(u,v)=0\}.
\end{eqnarray*}
Note that $\mathfrak{p}_u+B(u,v)\mathfrak{p}_{v} =\mathfrak{q}_u-C(u,v)\mathfrak{q}_v=0$  for $u=v$.

For a fixed number $v_0\in\R$ we define the following subspaces of $P,Q,M$
respectively
\begin{eqnarray}
P(v_0)&=&\spn\{\mathfrak{p}_u+B(u,v_0)\mathfrak{p}_{v_0} \mid u\in\R, D(u,v_0)=0, u\neq v_0\},\label{eq:P}\\
Q(v_0)&=&\spn\{\mathfrak{q}_u-C(u,v_0){\mathfrak{q}}_{v_0} \mid u\in\R, A(u,v_0)=0, u\neq v_0\},\label{eq:Q}\\
M(v_0)&=&\spn\{\mathfrak{p}_u-D(u,v_0){\mathfrak{q}}_{v_0} \mid u\in\R, B(u,v_0)=0\}\label{eq:M}.
\end{eqnarray}
In these definitions it is important to remember, that if $F$ is any of the functions $A,B,C,D$ of two variables, then
$$
Z(F)_{v_0}:=\{u\in\C \mid F(u,v_0)=0\}
$$
is a countably infinite set of real numbers, cf. Theorem 1.3 i \cite{B:S5}.

Letting $\mu[v_0]$ denote the unique N-extremal measure in $V$ with $v_0\in\supp(\mu[v_0])$,
cf. Proposition~\ref{thm:supportNext}, we have by Theorem 3 in \cite{B:C}
\begin{equation}\label{eq:supp}
\supp(\mu[v_0])=\{u\in\R \mid D(u,v_0)=0\},
\end{equation}
and hence
\begin{equation} 
P(v_0)=\spn\{\mathfrak{p}_u+B(u,v_0)\mathfrak{p}_{v_0} \mid u\in\supp(\mu[v_0])
\setminus\{v_0\}\}.
\end{equation}

In the next section we recall the parametrization $\mu_t, t\in\R^*$ of the N-extremal measures in $V$, and by Proposition~\ref{thm:supportNext} we have
\begin{equation}\label{eq:N-ext}
\mu[v_0]=\mu_t,\quad t=-B(v_0)/D(v_0)
\end{equation}
with the convention that $t=\infty$ if $D(v_0)=0$.

 The following result sharpens that $D(T)$ is dense in $\ell^2$ by giving concrete dense subspaces of $D(T)$, which are all spanned by countably many vectors. It is the first main result of this paper.

  \begin{thm}\label{thm:concrete} For each $v_0\in\R$ the subspaces $P(v_0), Q(v_0), M(v_0)$ are dense in $\ell^2$.
\end{thm}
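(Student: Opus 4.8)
The plan is to prove density of, say, $P(v_0)$ by showing its orthogonal complement in $\ell^2$ is trivial; the arguments for $Q(v_0)$ and $M(v_0)$ will be entirely analogous, with the roles of $\mathfrak p$, $\mathfrak q$ and of the Nevanlinna functions $A,B,C,D$ permuted according to the symmetries of the determinant equation \eqref{eq:2vardet}. So suppose $c\in\ell^2$ is orthogonal to every generator $\mathfrak p_u+B(u,v_0)\mathfrak p_{v_0}$ with $u\in Z(D)_{v_0}\setminus\{v_0\}$. Using the entire function $F_c$ of \eqref{eq:FGc} and the fact that $\langle c,\overline{\mathfrak p_z}\rangle=\sum c_n p_n(z)=F_c(z)$ (the polynomials are real, so the inner product against $\mathfrak p_z$ picks out $F_{\bar c}(\bar z)=\overline{F_c(z)}$ up to the standard conjugation bookkeeping), the orthogonality condition translates into the scalar equation
\begin{equation}\label{eq:orthcond}
F_c(u)+B(u,v_0)F_c(v_0)=0\qu\text{for all }u\in Z(D)_{v_0}\setminus\{v_0\}.
\end{equation}
The goal is to deduce $F_c\equiv 0$, hence $c=0$ by Parseval \eqref{eq:Pars}.

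The key is to recognize the right-hand structure in \eqref{eq:orthcond}. The function $u\mapsto D(u,v_0)$ is entire with simple zeros exactly on the countable real set $Z(D)_{v_0}$ (using that $\mu[v_0]=\mu_t$ is N-extremal with support given by \eqref{eq:supp}, so the zeros are simple — this is where I would cite the structure of N-extremal measures and Theorem 1.3 of \cite{B:S5}). Consider the entire function
\begin{equation*}
H(u):=F_c(u)+B(u,v_0)F_c(v_0).
\end{equation*}
By \eqref{eq:orthcond} it vanishes on all of $Z(D)_{v_0}$ except possibly at $u=v_0$; but at $u=v_0$ one has $B(v_0,v_0)=-1$, so $H(v_0)=F_c(v_0)-F_c(v_0)=0$ automatically. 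Hence $H$ vanishes on all of $Z(D)_{v_0}=Z(D(\cdot,v_0))$, so $H/D(\cdot,v_0)$ is entire. Now I would invoke the growth/order information: all the Nevanlinna functions are of minimal exponential type (order $\le 1$, type $0$), and $F_c$ is of exponential type at most the type associated with $c$ — more precisely $F_c$ belongs to a de Branges space and its growth is controlled by $\|\mathfrak p_z\|$, which grows like a function of minimal type. A quotient $H/D(\cdot,v_0)$ of such functions that is entire must itself have controlled growth; combined with the fact that $D(\cdot,v_0)$ is "large" on a suitable sequence (it is not in any $L^2(\mu)$, being the reproducing-kernel-type object whose zeros are exactly the mass points of an N-extremal measure), one forces $H/D(\cdot,v_0)$ to be a constant, and then that constant to be $0$.

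The cleanest way to finish, and the step I expect to be the main obstacle, is the last deduction: from "$H/D(\cdot,v_0)$ is entire and of sufficiently small growth" to "$H\equiv 0$", and then from $H\equiv 0$ to $F_c\equiv 0$. For the first part I would use the known factorization: since $\mu[v_0]$ is N-extremal, $L^2(\mu[v_0])$ is spanned by polynomials, and the indicator function of a point $v_0$ has an explicit expansion; concretely, one expects $H(u)=D(u,v_0)\,g(u)$ forces, via Parseval applied in $L^2(\mu[v_0])$ (all the relevant series converge there), that $g$ is a constant multiple of $1/(u-v_0)$ times something integrable, hence actually $H(u)=\text{const}\cdot D(u,v_0)/(u-v_0)$ — but $D(u,v_0)/(u-v_0)=\sum p_k(u)p_k(v_0)$ is not entire of the required type unless the constant is $0$. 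Once $H\equiv 0$ we get $F_c(u)=-B(u,v_0)F_c(v_0)$ identically; evaluating at a point $u_1$ with $B(u_1,v_0)=0$ (such exist since $Z(B)_{v_0}$ is infinite) gives $F_c(u_1)=0$; but then from part (iii)-type reasoning, or more simply by noting $B(v_0,v_0)=-1$ again, we would get $F_c(v_0)=F_c(v_0)\cdot(\text{something}\ne1)$ only if $F_c(v_0)=0$, whence $F_c\equiv0$. I would organize this last paragraph carefully, since balancing the growth estimate against the non-membership of $D(\cdot,v_0)$ in $L^2$ is the delicate point; a possible shortcut is to apply the already-proven Theorem~\ref{thm:pqmain}(i) directly — the condition $F_c=-B(\cdot,v_0)F_c(v_0)$ says the linear functional $c$ "is" evaluation-related to $\mathfrak p_{v_0}$ in a way that, together with $c\in\ell^2$, would contradict $\mathfrak p_{v_0}\notin D(T)$ unless $F_c(v_0)=0$.
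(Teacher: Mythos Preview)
Your setup is exactly right: the orthogonality of $c$ to the generators of $P(v_0)$ says precisely that
\[
F_c(u)+B(u,v_0)F_c(v_0)=0\qquad\text{for all }u\in\supp(\mu[v_0]),
\]
including $u=v_0$. But from here you take an unnecessarily hard road. You try to upgrade this pointwise identity on $\supp(\mu[v_0])$ to an identity $H\equiv 0$ of entire functions via a quotient-and-growth argument, and that step is genuinely problematic: knowing that $H$ and $D(\cdot,v_0)$ are each of minimal exponential type does \emph{not} give you control of the entire quotient $H/D(\cdot,v_0)$ --- quotients of functions of order $\le 1$ and type $0$ can have arbitrary growth. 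Your subsequent sketches (``one expects'', ``would contradict $\mathfrak p_{v_0}\notin D(T)$'') are not arguments, and the ``shortcut'' via Theorem~\ref{thm:pqmain}(i) does not do what you want either.

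The paper's route is far shorter and avoids entire-function theory entirely. The point you are missing is an $L^2$ argument: by Parseval \eqref{eq:Pars}, $F_c\in L^2(\mu[v_0])$. If $F_c(v_0)\neq 0$, the displayed identity says $B(\cdot,v_0)=-F_c/F_c(v_0)$ on $\supp(\mu[v_0])$, hence $B(\cdot,v_0)\in L^2(\mu[v_0])$. One then proves a short lemma (the paper's Lemma~\ref{thm:1-2}) that $B(\cdot,v_0)\notin L^2(\mu[v_0])$: since $p(u)B_n(u,v_0)\to p(u)B(u,v_0)$ in $L^1(\mu)$ and $\int p(u)B_n(u,v_0)\,d\mu(u)=0$ for $n>\deg p$ by Proposition~\ref{thm:An-Dn}, membership in $L^2(\mu[v_0])$ together with N-extremality would force $B(\cdot,v_0)$ to vanish on $\supp(\mu[v_0])$, contradicting $B(v_0,v_0)=-1$. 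Thus $F_c(v_0)=0$, whence $F_c=0$ on $\supp(\mu[v_0])$ and $c=0$ by Parseval. No growth estimates, no factorization.

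A second gap: your claim that $Q(v_0)$ and $M(v_0)$ are ``entirely analogous'' is too quick. For $Q(v_0)$ the paper reduces to case $P$ for the truncated Jacobi matrix $J^{(1)}$ (using that $(q_{n+1}/q_1)$ are its orthonormal polynomials), which is indeed a symmetry, but one that must be named. For $M(v_0)$ the argument is \emph{not} a straight permutation of letters: one must identify $\{u:B(u,v_0)=0\}$ as the support of a specific N-extremal measure $\mu_{t_0}$ with $t_0=-A(v_0)/C(v_0)$, rewrite the orthogonality condition using $B(u)=-t_0D(u)$ on that support, and then invoke the analogue $D(\cdot,0)\notin L^2(\mu_{t_0})$ (which needs $0\notin\supp(\mu_{t_0})$, i.e.\ a case split on $C(v_0)=0$). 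None of this is visible from the $P(v_0)$ argument by symmetry alone.
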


\begin{rem}\label{thm:optimal}
{\rm Let $u_1\neq u_2$ satisfy $D(u_1,v_0)=D(u_2,v_0)=0$, where $v_0\in\R$. Then $\mathfrak{p}_{u_1},\mathfrak{p}_{u_2}$ are orthogonal in $\ell^2$ because
$$
\langle \mathfrak{p}_{u_1},\mathfrak{p}_{u_2}\rangle=\frac{D(u_1,u_2)}{u_1-u_2},
$$
and by Theorem 5.1 in \cite{B:S5}
$$
D(u_1,u_2)=D(u_1,v_0)C(v_0,u_2)-B(u_1,v_0)D(v_0,u_2)=0.
$$
In particular, if $u_1\neq v_0$ satisfies $D(u_1,v_0)=0$, then
$$
\langle \mathfrak{p}_{u_1}, \mathfrak{p}_u+B(u,v_0)\mathfrak{p}_{v_0}\rangle=0 \;\mbox{for}\; D(u,v_0)=0, u\neq u_1,
$$
showing that $\mathfrak{p}_{u_1}$ is orthogonal to
$$
\spn\{\mathfrak{p}_u+B(u,v_0)\mathfrak{p}_{v_0} \mid D(u,v_0)=0,u\neq u_1\},
$$
so the latter cannot be dense in $\ell^2$.
In other  words, the family \eqref{eq:P} is optimal for density in $\ell^2$.

Similarly the family \eqref{eq:Q} is optimal for density in $\ell^2$.
}
\end{rem}

The proof of Theorem~\ref{thm:concrete} will be given in Section 3.

\section{Preliminaries about indeterminate moment problems}

For the proof of Theorem~\ref{thm:concrete} we need the following polynomial approximations to the Nevanlinna functions.
 
\begin{prop}\cite[Proposition 5.24]{Sch}\label{thm:An-Dn} For $u,v\in\C$ and $n\ge 0$ we have
\begin{eqnarray*}
A_n(u,v)&:=&(u-v)\sum_{k=0}^nq_k(u)q_k(v)=
a_n\left|\begin{array}{cc}
 q_{n+1}(u)&\;q_{n+1}(v)\\q_{n}(u)&\;q_{n}(v)\end{array}\right|\\
B_n(u,v)&:=&-1+(u-v)\sum_{k=0}^n p_k(u)q_k(v)=
a_n\left|\begin{array}{cc}
p_{n+1}(u)&\;q_{n+1}(v)\\p_{n}(u)&\;q_{n}(v)\end{array}\right|\\
C_n(u,v)&:=&1+(u-v)\sum_{k=0}^n q_k(u)p_k(v)=
a_n\left|\begin{array}{cc}
q_{n+1}(u)&\;p_{n+1}(v)\\
q_{n}(u)&\;p_{n}(v)\end{array}\right|\\
D_n(u,v)&:=&(u-v)\sum_{k=0}^np_k(u)p_k(v)=
a_n\left|\begin{array}{cc}
p_{n+1(}u)&\;p_{n+1}(v)\\
p_{n}(u)&\;p_{n}(v)\end{array}\right|.
\end{eqnarray*}
\end{prop}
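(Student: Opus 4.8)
The plan is to obtain all four identities from a single discrete Christoffel--Darboux computation, the four cases differing only in the behaviour of the two factor sequences at the index $k=0$. First I would recall that, alongside $(p_n)$, the second kind polynomials $(q_n)$ satisfy the same three term recurrence, but only for $n\ge 1$, together with the initial data $q_0=0$ and $q_1=1/a_0$; the value $q_1=1/a_0$ comes from $p_1(x)=(x-b_0)/a_0$ and the integral defining $q_1$. Thus for fixed $u$ the sequence $(p_k(u))_{k\ge0}$ satisfies $uf_k=a_kf_{k+1}+b_kf_k+a_{k-1}f_{k-1}$ for all $k\ge0$ (with $p_{-1}(u)=0$), while $(q_k(u))_{k\ge0}$ satisfies it for all $k\ge1$.

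The engine of the proof is the following elementary remark. If $(f_k)$ and $(g_k)$ satisfy $uf_k=a_kf_{k+1}+b_kf_k+a_{k-1}f_{k-1}$ and $vg_k=a_kg_{k+1}+b_kg_k+a_{k-1}g_{k-1}$ respectively, then multiplying the first identity by $g_k$, the second by $f_k$ and subtracting gives the telescoping relation
\[
(u-v)f_kg_k=W_k-W_{k-1},\qquad W_k:=a_k\bigl(f_{k+1}g_k-f_kg_{k+1}\bigr),
\]
valid at every $k$ for which both recurrences hold. Summing over a range of such indices collapses the left-hand sum to $W_n$ minus one boundary term, and $W_n$ is visibly $a_n$ times the $2\times2$ determinant appearing in the statement.

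It then remains to treat the four cases. For $D_n$ take $f_k=p_k(u)$, $g_k=p_k(v)$: the relation holds for all $k\ge0$, so summing from $0$ to $n$ and using $a_{-1}=0$ gives $(u-v)\sum_{k=0}^np_k(u)p_k(v)=W_n$, as claimed. For $A_n$, $B_n$, $C_n$ one of the factors is a $q$-sequence, so the telescoping relation is only available for $k\ge1$; I would sum it from $1$ to $n$, obtaining $W_n-W_0$, and then add back the $k=0$ term of the full sum. Using $q_0=0$, $q_1=1/a_0$, $p_0=1$ one checks that the $k=0$ term of the sum vanishes in all three cases, while $W_0$ equals $0$ for $A_n$, equals $-1$ for $B_n$ (since $W_0=a_0(p_1(u)q_0(v)-p_0(u)q_1(v))=-1$), and equals $+1$ for $C_n$ by symmetry. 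These are exactly the constants $-1$ and $+1$ built into the definitions of $B_n$ and $C_n$, so in each case the expression on the left of the asserted identity equals $W_n$, i.e. $a_n$ times the stated determinant.

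The argument is entirely routine; the only point requiring care is the bookkeeping at $k=0$, where the mismatch between the ranges of validity of the $p$- and $q$-recurrences produces precisely the additive constants that appear in the definitions of $B_n$ and $C_n$.
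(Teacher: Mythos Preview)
Your argument is correct: the Christoffel--Darboux telescoping identity $(u-v)f_kg_k=W_k-W_{k-1}$ is exactly the right tool, and your bookkeeping at $k=0$ (yielding boundary contributions $0,-1,+1,0$ in the four cases) is accurate. The paper itself does not supply a proof of this proposition---it is quoted verbatim from \cite[Proposition~5.24]{Sch}---so there is no in-paper argument to compare against; what you have written is precisely the standard proof one finds in that reference.
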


The Jacobi operator $(T,D(T))$ has deficiency indices $(1,1)$ and the self-adjoint extensions in
$\ell^2$ can be parametrized as the operators $T_t, t\in\R^*=\R\cup \{\infty\}$ with domain  
\begin{equation}\label{eq:domTt}
D(T_t)=D(T)\oplus \C  (\mathfrak{q}_0 + t\mathfrak{p}_0) \;\mbox{for}\;t\in\R,\quad
D(T_\infty)=D(T)\oplus \C \mathfrak{p}_0
\end{equation}
and defined by the restriction of $T^*$ to the domain, cf. \cite[Theorem 6.23]{Sch}.
We recall that $\mathfrak{p}_0, \mathfrak{q}_0$ are defined in \eqref{eq:frak}.

For $t\in\R^*$ we define the solutions to the moment sequence \eqref{eq:mom}
\begin{equation}\label{eq:Next}
\mu_t(\cdot):=\langle E_t(\cdot)e_0,e_0\rangle,
\end{equation}
where $E_t(\cdot)$ is the spectral measure of the self-adjoint operator $T_t$.

The measures $\mu_t, t\in\R^*$ are precisely those measures $\mu\in V$  for which the polynomials $\C[x]$ are dense in $L^2(\mu)$ according to a famous theorem of M. Riesz, cf. \cite{Ri}.
 They are called N-extremal  in \cite{Ak} and von Neumann solutions in \cite{S}, and they form a compact subset of the set $\ext(V)$ of extreme points of the convex set $V$. However, $\ext(V)$ is known to be a dense subset of $V$.  The N-extremal measures are characterized by the formula
\begin{equation}\label{eq:Npar}
\int\frac{d\mu_t(x)}{x-z}=-\frac{A(z)+tC(z)}{B(z)+tD(z)},\quad z\in \C\setminus\R, t\in\R^*,
\end{equation}
where $A,\ldots,D$ are the entire functions given in \eqref{eq:A-D}, cf. \cite[Theorem 7.6]{Sch}. Recall that \eqref{eq:det} holds, so the right-hand side of \eqref{eq:Npar} is a M\"{o}bius transformation in $t$. We note in passing that the solutions $\mu\in V$ different from the N-extremal ones are given in \eqref{eq:Npar}, when $t$ is replaced by a non-degenerate Pick function $\varphi:\C\setminus\R \to\C$, cf. Theorem 7.13 in \cite{Sch}.

We summarize some of the properties of $\mu_t$, which can be found in \cite{Ak} and \cite{Sch}.

\begin{prop}\label{thm:supportNext}
\begin{enumerate}
\item[(i)] The  solution $\mu_t$ is a discrete measure with support equal to the countable zero set $\Lambda_t$ of the entire function $B(z)+tD(z)$, with the convention that $\Lambda_\infty$ is the zero set of $D$. We have $\Lambda_t\subset\R$ for $t\in\R^*$.
\item[(ii)] The support of two different N-extremal solutions are disjoint and interlacing. Each point $x_0\in\R$ belongs to the support of a unique N-extremal measure $\mu_t$, where
$t\in\R^*$ is given  as $t=-B(x_0)/D(x_0)$ if $D(x_0)\neq 0$ and $t=\infty$ if $D(x_0)=0$.
\item[(iii)] If $x_0\in\R$ belongs to the support of the N-extremal measure $\mu_t$, then the measure $\mu_t-\mu_t(x_0)\delta_{x_0}$ is determinate.
\end{enumerate}
\end{prop}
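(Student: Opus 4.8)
The plan is to treat the three assertions in turn, using throughout the explicit Cauchy transform \eqref{eq:Npar} and the determinant identity \eqref{eq:det}, which together control the poles of the transform of $\mu_t$.

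For part (i), I would first note that $e_0$ is cyclic for each self-adjoint extension $T_t$: from $T_te_n=a_{n-1}e_{n-1}+b_ne_n+a_ne_{n+1}$ an induction gives $e_n\in\spn\{e_0,T_te_0,\dots,T_t^ne_0\}$, so $\mu_t=\langle E_t(\cdot)e_0,e_0\rangle$ is a scalar spectral measure with $\supp\mu_t=\sigma(T_t)$, and its Cauchy transform is the meromorphic function \eqref{eq:Npar}. By \eqref{eq:det} the numerator $A+tC$ and denominator $B+tD$ have no common zero (a common zero would force $AD-BC=0\neq1$), so the poles of $\int d\mu_t(x)/(x-z)$ are exactly the zeros of $B+tD$ (of $D$ when $t=\infty$), and they are simple. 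Stieltjes inversion then shows $\mu_t$ puts no mass on any interval across which the transform is holomorphic, so $\mu_t$ is purely atomic with atoms precisely at those poles; since the transform is holomorphic off $\R$, the poles lie in $\R$. This gives discreteness and $\supp\mu_t=\Lambda_t\subset\R$. (The eigenvector picture $T^*\mathfrak{p}_{x_0}=x_0\mathfrak{p}_{x_0}$ from \eqref{eq:p,q} yields the same identification, $\mathfrak{p}_{x_0}$ lying in $D(T_t)$ exactly when $B(x_0)+tD(x_0)=0$.)

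Part (ii) splits into an algebraic half and an analytic half. Disjointness and covering are immediate from \eqref{eq:det}: if $B(x_0)+tD(x_0)=B(x_0)+sD(x_0)=0$ with $t\neq s$, then $D(x_0)=B(x_0)=0$, contradicting \eqref{eq:det}; and for fixed $x_0$ the equation $B(x_0)+tD(x_0)=0$ has the unique solution $t=-B(x_0)/D(x_0)\in\R^*$. Thus the supports $\Lambda_t$ are the fibres of the real meromorphic function $-B/D$. For interlacing I would use
\[
\Phi_t-\Phi_s=\frac{t-s}{(B+tD)(B+sD)},\qquad \Phi_t:=-\frac{A+tC}{B+tD},
\]
which follows from \eqref{eq:det} since the numerator of the difference collapses to $(t-s)(AD-BC)=t-s$ (with $\Phi_t-\Phi_\infty=-1/(D(B+tD))$ for $t=\infty$). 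Each $\Phi_t=\int d\mu_t(x)/(x-z)$ is strictly increasing on every interval of $\R$ between consecutive atoms, sweeping from $-\infty$ to $+\infty$. If $a<b$ are consecutive zeros of $B+tD$ and $B+sD$ had no zero in $(a,b)$, then $(B+tD)(B+sD)$ would keep constant sign there, hence so would $\Phi_t-\Phi_s$; but $\Phi_t$ ranges over all of $\R$ on $(a,b)$ while $\Phi_s$ stays bounded, forcing a sign change — a contradiction. So at least one zero of $B+sD$ lies in $(a,b)$, and the symmetric argument excludes a second, giving interlacing. The only delicate points are the strict monotonicity and the simplicity of the atoms, both consequences of $\mu_t$ being a positive atomic measure.

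For part (iii) the key input is the classical extremal property of the masses of N-extremal measures, which I regard as the main obstacle. Write $m_0=\mu_t(\{x_0\})$ and $\nu=\mu_t-m_0\delta_{x_0}$. If $\rho$ solves the moment problem of $\nu$, then $\rho+m_0\delta_{x_0}$ is a positive measure with the moments of $\mu_t$, so it lies in $V$ and carries mass $\geq m_0$ at $x_0$. It therefore suffices to know that $m_0=1/\sum_n p_n(x_0)^2$ is the largest mass any $\sigma\in V$ can place at $x_0$, attained only by $\mu_t=\mu[x_0]$; then $\rho+m_0\delta_{x_0}=\mu_t$, i.e. $\rho=\nu$, and $\nu$ is determinate. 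The bound $\sigma(\{x_0\})\sum_n p_n(x_0)^2\leq1$ comes from Bessel's inequality for $\mathbf{1}_{\{x_0\}}\in L^2(\sigma)$, whose coefficients against $(p_n)$ are $\sigma(\{x_0\})p_n(x_0)$; for N-extremal $\sigma$ the polynomials are dense, so $(p_n)$ is an orthonormal basis and equality holds, giving $m_0=1/\sum_n p_n(x_0)^2$. For uniqueness of the maximizer, equality in Bessel forces $\mathbf{1}_{\{x_0\}}=m_0\sum_n p_n(x_0)p_n$ in $L^2(\sigma)$, so $\sigma$ is supported in $\{x_0\}\cup\{u:D(u,x_0)=0\}=\supp\mu[x_0]$ by \eqref{eq:supp}; together with the pointwise bound $\sigma(\{\lambda\})\leq\mu[x_0](\{\lambda\})$ and equality of total masses this yields $\sigma=\mu[x_0]$, completing the argument.
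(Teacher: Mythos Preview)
The paper does not give its own proof of this proposition: it is stated as a summary of known facts ``which can be found in \cite{Ak} and \cite{Sch}'' and is not proved in the text. There is therefore nothing to compare against directly.

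That said, your argument is correct and is essentially the classical route taken in those references. Part (i) is exactly the usual combination of cyclicity of $e_0$, the Cauchy transform \eqref{eq:Npar}, and the determinant identity \eqref{eq:det} ruling out common zeros. In part (ii) the disjointness/covering half is clean; your interlacing argument via the identity $\Phi_t-\Phi_s=(t-s)/((B+tD)(B+sD))$ and the monotonicity of the Stieltjes transform between consecutive poles is the standard one (one can equivalently phrase it as $-B/D$ being a Herglotz function on $\R\setminus\Lambda_\infty$). Part (iii) is the classical maximum-mass argument: the Bessel bound $\sigma(\{x_0\})\le 1/\|\mathfrak p_{x_0}\|^2$ with equality forcing the reproducing-kernel relation $\mathbf 1_{\{x_0\}}=m_0\sum_n p_n(x_0)p_n$ in $L^2(\sigma)$, and hence $\supp\sigma\subset\supp\mu[x_0]$. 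One small remark: you don't need to write $\{x_0\}\cup\{u:D(u,x_0)=0\}$, since $D(x_0,x_0)=0$ already by \eqref{eq:D}, so $x_0\in\supp\mu[x_0]$ automatically; and the $t=\infty$ cases in (ii) deserve one line each, though they follow by the same reasoning you sketch for $\Phi_t-\Phi_\infty$.
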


Putting $z=0$ in \eqref{eq:Npar}, which is possible when $0\notin \supp(\mu_t)$, leads to
\begin{equation}\label{eq:Npar0}
\int\frac{d\mu_t(x)}{x}=t,\quad t\in\R.
\end{equation}

\section{Proof of Theorem~\ref{thm:concrete}}
 In the proof of Theorem~\ref{thm:concrete} we need the following result about the functions $B(u,v), D(u,v)$. It is  of independent interest.

\begin{lem}\label{thm:1-2} (i) For $\mu\in V, v\in\C$ and any polynomial $p$ the functions $p(u)B(u,v), p(u)D(u,v)$ belong to $L^1(\mu)$ as functions of $u$ and
$$
\int p(u)B(u,v)\,d\mu(u)=\int p(u)D(u,v)\,d\mu(u)=0.
$$ 

(ii) For $v_0\in\R$ we have $B(u,v_0)\notin L^2(\mu[v_0])$.

(iii) For $v_0\in\R$ we have $D(u,v_0)\notin L^2(\mu)$ for any N-extremal measure $\mu\neq \mu[v_0]$, while $D(u,v_0)$ is the zero element in $L^2(\mu[v_0])$.
 \end{lem}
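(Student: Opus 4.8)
The plan is to deduce parts (ii) and (iii) quickly from part (i), so the heart of the matter is (i). The key observation for (i) is that, viewed as functions of $u$ with $v$ fixed, $D(u,v)$ and $B(u,v)$ are essentially the transforms $F_{\mathfrak p_v}$ and $F_{\mathfrak q_v}$ from \eqref{eq:FGc}: comparing \eqref{eq:D} and \eqref{eq:B} with \eqref{eq:FGc} and \eqref{eq:frak} gives the identities of entire functions of $u$
\begin{equation*}
D(u,v)=(u-v)F_{\mathfrak p_v}(u),\qquad B(u,v)=-1+(u-v)F_{\mathfrak q_v}(u),
\end{equation*}
valid for $u\neq v$ and hence everywhere. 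Since $\mathfrak p_v,\mathfrak q_v\in\ell^2$, both $F_{\mathfrak p_v}$ and $F_{\mathfrak q_v}$ lie in $L^2(\mu)$ for every $\mu\in V$, and for a polynomial $p$ the function $r(u):=(u-v)p(u)$ is again a polynomial, hence in $L^2(\mu)$ as $\mu$ has moments of every order. By Cauchy--Schwarz $r\,F_{\mathfrak p_v}$ and $r\,F_{\mathfrak q_v}$ belong to $L^1(\mu)$, which together with $p\in L^1(\mu)$ gives the integrability statements in (i).

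For the vanishing of the integrals I would expand $r=\sum_{j=0}^{N}\rho_j p_j$ with $N=\deg p+1$ and $\rho_j=\int r p_j\,d\mu$, noting $r(v)=\sum_j\rho_j p_j(v)=p(v)(v-v)=0$. The partial sums $S_N(u):=\sum_{n=0}^{N}p_n(v)p_n(u)$ converge to $F_{\mathfrak p_v}$ in $L^2(\mu)$, so by Cauchy--Schwarz $\int r F_{\mathfrak p_v}\,d\mu=\lim_N\int r S_N\,d\mu$, while for $N\ge\deg p+1$ we have $\int r S_N\,d\mu=\sum_{n=0}^{N}\rho_n p_n(v)=r(v)=0$; hence $\int p(u)D(u,v)\,d\mu(u)=0$. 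The same argument for $F_{\mathfrak q_v}$ gives $\int r F_{\mathfrak q_v}\,d\mu=\sum_{j=0}^{N}\rho_j q_j(v)$, and inserting the integral representation of $q_j$ and using $r(v)=0$ collapses this finite sum to $\int\frac{r(v)-r(x)}{v-x}\,d\mu(x)=-\int\frac{r(x)}{v-x}\,d\mu(x)=\int p(x)\,d\mu(x)$, so $\int p(u)B(u,v)\,d\mu(u)=-\int p\,d\mu+\int p\,d\mu=0$. This proves (i).

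Parts (ii) and (iii) then follow from (i) combined with \eqref{eq:supp} and the N-extremality of $\mu[v_0]$ and $\mu_t$. For (ii), suppose $B(\,\cdot\,,v_0)\in L^2(\mu[v_0])$. Since $\mu[v_0]$ is N-extremal, $(p_n)$ is an orthonormal basis of $L^2(\mu[v_0])$, and by (i) every Fourier coefficient $\int B(u,v_0)p_n(u)\,d\mu[v_0](u)$ vanishes, so $B(\,\cdot\,,v_0)=0$ in $L^2(\mu[v_0])$, i.e.\ $B(u,v_0)=0$ for all $u\in\supp(\mu[v_0])$. But by \eqref{eq:supp} $D(u,v_0)=0$ there, whence $B(u,v_0)C(u,v_0)=-1$ by \eqref{eq:2vardet}, forcing $B(u,v_0)\neq0$ --- a contradiction. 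For (iii), \eqref{eq:supp} shows $D(u,v_0)=0$ on $\supp(\mu[v_0])$, so $D(\,\cdot\,,v_0)$ is the zero element of $L^2(\mu[v_0])$; and if $\mu=\mu_t\neq\mu[v_0]$ is N-extremal with $D(\,\cdot\,,v_0)\in L^2(\mu)$, the same Fourier-coefficient argument forces $D(u,v_0)=0$ on $\supp(\mu_t)$, so $\emptyset\neq\supp(\mu_t)\subseteq\{u\in\R:D(u,v_0)=0\}=\supp(\mu[v_0])$ by \eqref{eq:supp}, contradicting the disjointness of the supports of distinct N-extremal measures, Proposition~\ref{thm:supportNext}(ii).

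The main obstacle is part (i): the remaining parts are short once (i) is in hand. In (i) the delicate points are identifying the auxiliary functions $F_{\mathfrak p_v},F_{\mathfrak q_v}$, justifying the term-by-term integration via $L^2(\mu)$-convergence of the defining series and finiteness of all moments of $\mu$, and the computation collapsing $\sum_j\rho_j q_j(v)$ to $\int p\,d\mu$. It is also worth noting that the $L^1$-integrability proved in (i) is precisely what legitimizes speaking of the Fourier coefficients $\int B(u,v_0)p_n(u)\,d\mu[v_0](u)$ and $\int D(u,v_0)p_n(u)\,d\mu(u)$ in the proofs of (ii) and (iii) without circularity.
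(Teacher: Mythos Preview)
Your proof is correct and follows the same overall architecture as the paper: establish (i) via the identification $D(u,v)=(u-v)F_{\mathfrak p_v}(u)$, $B(u,v)=-1+(u-v)F_{\mathfrak q_v}(u)$ together with $L^2(\mu)$-convergence and Cauchy--Schwarz, then derive (ii) and (iii) by the density of polynomials in $L^2$ of an N-extremal measure. The differences are tactical. For the vanishing of the integrals in (i) the paper invokes Proposition~\ref{thm:An-Dn}, whose determinant formulas show that $B_n(\cdot,v)$ and $D_n(\cdot,v)$ lie in $\spn\{p_n,p_{n+1}\}$ and are therefore orthogonal to $p$ once $n>\deg p$; you instead expand $r=(u-v)p(u)$ in the $p_j$ and compute $\sum_j\rho_jp_j(v)=r(v)=0$ and $\sum_j\rho_jq_j(v)=\int p\,d\mu$ directly, which is self-contained and avoids Proposition~\ref{thm:An-Dn} at the cost of the extra calculation for $B$. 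In (ii) the paper reaches the contradiction more quickly by evaluating at $u=v_0\in\supp(\mu[v_0])$ and noting $B(v_0,v_0)=-1$, whereas you appeal to \eqref{eq:2vardet} on the whole support; both are fine. Part (iii) is identical to the paper's argument.
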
 

\begin{proof} (i). With the notation of \eqref{eq:FGc} we have
\begin{eqnarray*}
p(u)B(u,v)&=&-p(u)+p(u)(u-v)F_{(q_k(v))}(u),\\
 p(u)D(u,v)&=&p(u)(u-v)F_{(p_k(v))}(u),
\end{eqnarray*}
so it is clear from the Cauchy-Schwarz inequality that $p(u)B(u,v), p(u)D(u,v)\in L^1(\mu)$ as functions of $u$ and furthermore that
$p(u)B_n(u,v)\to p(u)B(u,v)$ and $p(u)D_n(u,v)\to p(u)D(u,v)$  in $L^1(\mu)$ for $n\to\infty$. However,
\begin{equation}\label{eq:int0}
\int p(u)B_n(u,v)\,d\mu(u)=\int p(u)D_n(u,v)\,d\mu(u)=0
\end{equation}
 for $n>\deg(p)$ by Proposition~\ref{thm:An-Dn}, and (i) follows.

(ii). If $B(u,v_0)\in L^2(\mu[v_0])$ it follows from \eqref{eq:int0} that $B(u,v_0)=0$ for all $u\in\supp(\mu[v_0])$ and in particular for $u=v_0$, which is a contradiction since $B(v_0,v_0)=-1$.

(iii). If $D(u,v_0)\in L^2(\mu)$ for an N-extremal measure $\mu$, we get that $D(u,v_0)=0$ for all $u\in\supp(\mu)$. This is not possible if $\mu\neq \mu[v_0]$ because then  
$\supp(\mu)$ is disjoint from $\supp(\mu[v_0])$ and this contradicts \eqref{eq:supp}, which also shows that  $D(u,v_0)$ is the zero element in $L^2(\mu[v_0])$.
\end{proof}

\medskip
{\it Proof of Theorem~\ref{thm:concrete}} (i). Assume that $c\in\ell^2$ is orthogonal to $P(v_0)$ and let us prove that $c=0$.  The orthogonality can be expressed as
$$
F_c(u)+B(u,v_0)F_c(v_0)=0
$$
for all $u\in\supp(\mu[v_0])\setminus\{v_0\}$,
but this equation  clearly holds for $u=v_0$ as well. 

If $F_c(v_0)\neq 0$ then $B(u,v_0)=-F_c(u)/F_c(v_0)$ for $u\in\supp(\mu[v_0])$ and in particular
$B(u,v_0)\in L^2(\mu[v_0])$, which contradicts Lemma~\ref{thm:1-2}. Therefore $F_c(v_0)=0$ and then $F_c(u)=0$ for all $u\in\supp(\mu[v_0])$, hence
$$
0=\int|F_c(u)|^2\,d\mu[v_0](u)=||c||^2
$$
and therefore $c=0$.

(ii). This case  can be deduced from case (i) by using the observation that
the polynomials $(q_{n+1}(x)/q_1(x))_{n\ge 0}$ are the orthonormal polynomials associated with the truncated Jacobi matrix $J^{(1)}$ obtained from $J$ by removing the first row and column. See \cite[p. 28]{Ak}, \cite[p. 122]{B:S5} and \cite{Pe} for details.

(iii). Assume that $c\in\ell^2$ is orthogonal to $M(v_0)$ and let us prove that $c=0$.  The orthogonality can be expressed as
\begin{equation}\label{eq:ortM}
F_c(u)-D(u,v_0)G_c(v_0)=0\;\mbox{for all}\; u\in\R\;\mbox{such that}\; B(u,v_0)=0.
\end{equation} 

From formula (5.6) in \cite{B:S5} we have $B(u,v_0)=0$ if and only if 
\begin{equation*}
 \left\{\begin{array}{cl}
 B(u)-\frac{A(v_0)}{C(v_0)}D(u)=0 & \mbox{if} \;\; C(v_0)\neq 0,\\
 D(u)=0 & \mbox{if}\;\; C(v_0)=0.
 \end{array}
  \right.
  \end{equation*}
From Proposition~\ref{thm:supportNext} this set of $u$'s is the support of the N-extremal measure
$\mu_{t_0}$, where $t_0=-A(v_0)/C(v_0)$, interpreted as $t_0=\infty$ if $C(v_0)=0$.

By formula (5.8) in \cite{B:S5} the orthogonality condition  \eqref{eq:ortM} can  be expressed
\begin{equation}\label{eq:orth}
F_c(u)-(B(u)D(v_0)-D(u)B(v_0))G_c(v_0)=0,\quad u\in\supp(\mu_{t_0}).
\end{equation} 

{\bf Case  $C(v_0)=0$:}

Then $t_0=\infty$ and $\supp(\mu_\infty)=\{u\mid D(u)=0\}$, so \eqref{eq:orth} states
$$
F_c(u)-B(u)D(v_0)G_c(v_0)=0,\quad u\in\supp(\mu_{\infty}).
$$
If $G_c(v_0)\neq 0$ then
$$
B(u)=\frac{F_c(u)}{D(v_0)G_c(v_0)},\quad u\in\supp(\mu_\infty),
$$
but since $0\in\supp(\mu_\infty)$ this contradicts (ii) of Lemma~\ref{thm:1-2}.
Therefore $G_c(v_0)=0$ and from \eqref{eq:Pars} with $\mu=\mu_\infty$, we get that $c=0$.

\medskip
{\bf Case $C(v_0)\neq 0$:}

Then $t_0=-A(v_0)/C(v_0)\in\R$ and we have for $u\in\supp(\mu_{t_0})$ that
$B(u)=-t_0D(u)$  and hence
$$
B(u)D(v_0)-D(u)B(v_0)=D(u)(-t_0D(v_0)-B(v_0))=\frac{D(u)}{C(v_0)},
$$
where we used \eqref{eq:det}.
Equation \eqref{eq:orth} can now be stated
$$
F_c(u)-\frac{D(u)}{C(v_0)}G_c(v_0),\quad u\in\supp(\mu_{t_0}).
$$
If $G_c(v_0)\neq 0$ then $D(u)\in L^2(\mu_{t_0})$, which contradicts
(iii) of Lemma~\ref{thm:1-2} because $D(u)=D(u,0)$ and $0\notin \supp(\mu_{t_0})$.
Therefore $G_c(v_0)=0$ so $F_c(u)=0$ on $\supp(\mu_{t_0})$ and finally $c=0$.
$\square$
\medskip
                  
In Remark~\ref{thm:optimal} we noticed that for fixed $v_0\in\R$, the  family of vectors
$$
\{\mathfrak{p}_u \mid u\in\R, D(u,v_0)=0\}
$$
are mutually orthogonal in $\ell^2$. We claim that the corresponding normalized vectors
\begin{equation}\label{eq:nb}
\tilde{\mathfrak{p}}_u=\mathfrak{p}_u/||\mathfrak{p}_u||
\end{equation}
 form an orthonormal basis in $\ell^2$. In fact, if $c\in\ell^2$ is orthogonal to these vectors, we know that the entire function $F_c$ defined in \eqref{eq:FGc} satisfies $F_c(u)=0$ for $u\in\supp(\mu[v_0])$, which by 
\eqref{eq:Pars} implies that $c=0$.

We next recall the following easily established Lemma:

\begin{lem}\label{thm:dense} Let $x_n, n\ge 1$ be an orthonormal basis of a complex Hilbert space $\mathcal H$ and let $(a_n)_{n\ge 2}$ be a sequence of complex numbers. Then the subspace $\spn \{x_n+a_nx_1, n\ge 2\}$ is dense in $\mathcal H$ if and only if
$\sum |a_n|^2=\infty$.
\end{lem}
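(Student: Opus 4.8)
The plan is to work directly with coordinates relative to the given orthonormal basis $(x_n)_{n\ge 1}$. Write $\mathcal H\ni y=\sum_{n\ge 1}c_nx_n$ with $\sum|c_n|^2<\infty$, and let $\mathcal S=\spn\{x_n+a_nx_1\mid n\ge 2\}$. Denseness of $\mathcal S$ is equivalent to $\mathcal S^\perp=\{0\}$, so I would compute the orthogonal complement. A vector $y$ is orthogonal to every $x_n+a_nx_1$ ($n\ge 2$) precisely when
$$
c_n+\overline{a_n}\,c_1=0\quextext{for all }n\ge 2,
$$
that is, $c_n=-\overline{a_n}\,c_1$ for $n\ge 2$, while $c_1$ is free. (Here I should avoid the stray macro: write the display as $c_n+\overline{a_n}c_1=0$ for all $n\ge2$.)

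Now distinguish two cases according to whether $\sum|a_n|^2$ converges. If $\sum_{n\ge2}|a_n|^2=\infty$, then for the candidate $y$ to lie in $\mathcal H$ we need $\sum_{n\ge2}|c_n|^2=|c_1|^2\sum_{n\ge2}|a_n|^2<\infty$, which forces $c_1=0$, and then $c_n=0$ for all $n$, so $\mathcal S^\perp=\{0\}$ and $\mathcal S$ is dense. Conversely, if $\sum_{n\ge2}|a_n|^2<\infty$, set $c_1=1$ and $c_n=-\overline{a_n}$ for $n\ge2$; this defines a nonzero element $y=x_1-\sum_{n\ge2}\overline{a_n}x_n$ of $\mathcal H$ which, by construction, is orthogonal to every $x_n+a_nx_1$. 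Hence $\mathcal S^\perp\neq\{0\}$ and $\mathcal S$ is not dense. Combining the two implications gives the stated equivalence.

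There is no real obstacle here; the only point requiring a word of care is the passage from ``orthogonal to each generator $x_n+a_nx_1$'' to ``orthogonal to $\overline{\mathcal S}$'', which is immediate since orthogonality to a spanning set of a subspace is orthogonality to its closure, together with the elementary fact that a subspace of a Hilbert space is dense iff its orthogonal complement is trivial. One should also note that the case $n=1$ imposes no condition (the generators only involve indices $n\ge2$ together with $x_1$), which is why $c_1$ remains a free parameter and the dichotomy is governed entirely by the summability of $(a_n)_{n\ge2}$.
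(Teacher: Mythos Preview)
Your argument is correct and is precisely the natural one: compute $\mathcal S^\perp$ in coordinates, obtain $c_n=-\overline{a_n}\,c_1$ for $n\ge 2$, and observe that a nonzero such $y$ exists in $\mathcal H$ if and only if $\sum_{n\ge 2}|a_n|^2<\infty$. The paper does not supply a proof of this lemma at all; it simply introduces it as an ``easily established Lemma'' and immediately applies it, so there is nothing to compare against beyond noting that your approach is exactly the standard one the authors have in mind. One cosmetic point: the stray macro \texttt{\textbackslash quextext} in your display must be replaced (e.g.\ by \texttt{\textbackslash qquad} or by writing the condition in words after the display) before the proof can compile.
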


Applying  the lemma to the orthonormal basis \eqref{eq:nb}, we get that
the span of the family
$$
||\mathfrak{p}_u||^{-1}(\mathfrak{p}_u+B(u,v_0)\mathfrak{p}_{v_0})=\tilde{\mathfrak{p}}_u+B(u,v_0)\frac{||\mathfrak{p}_{v_0}||}{||\mathfrak{p}_u||}\tilde{\mathfrak{p}}_{v_0}
,\quad D(u,v_0)=0,u\neq v_0,
$$
is dense in $\ell^2$ if and only if
$$
\sum_{u\in\supp(\mu[v_0])\setminus\{v_0\}}\frac{B(u,v_0)^2}{||\mathfrak{p}_u||^2}=\infty.
$$
However, since $\mu[v_0](\{u\})=1/||\mathfrak{p}_u||^2$ for $u\in\supp(\mu[v_0])$, this is equivalent to
$$
\int B(u,v_0)^2\,d\mu[v_0](u)=\infty.
$$
This gives another proof of the first part of Theorem~\ref{thm:concrete} stating that $P(v_0)$ is dense in $\ell^2$ based on Lemma~\ref{thm:1-2} (ii).

\noindent
Christian Berg\\
Department of Mathematical Sciences, University of Copenhagen\\
Universitetsparken 5, DK-2100 Copenhagen, Denmark\\
e-mail: {\tt{berg@math.ku.dk}}

\vspace{0.4cm}
\noindent
Ryszard Szwarc\\
Institute of Mathematics, University of Wroc{\l}aw\\
pl.\ Grunwaldzki 2/4, 50-384 Wroc{\l}aw, Poland\\ 
e-mail: {\tt{szwarc2@gmail.com}}

\end{document}